
%
\documentclass{amsart}
%
%
\usepackage{amsmath,amsthm}%
\usepackage{amsfonts}%
\usepackage{amssymb}%
\usepackage[utf8]{inputenc}%
\usepackage[spanish,brazil,english]{babel}%
\usepackage{graphicx}%
\usepackage{latexsym,multicol,enumerate}%
\usepackage{color}%
%
\theoremstyle{plain}
\newtheorem{theorem}{Theorem}

\newtheorem{corollary}[theorem]{Corollary}

\newtheorem{definition}{Definition}

\newtheorem{proposition}[theorem]{Proposition}
\newtheorem{remark}[theorem]{Remark}

\numberwithin{equation}{section}
\begin{document}
\title[Bifurcation for Yamabe problem in manifolds with boundary]{Bifurcation results for the Yamabe problem on Riemannian manifolds with boundary}
\author{Elkin D. Cardenas }
\date{April 2016}

\begin{abstract}
	
We consider the product of a compact Riemannian manifold without boundary and null scalar curvature with a compact Riemannian manifold with boundary, null scalar curvature and constant mean curvature on the boundary. We use bifurcation theory to prove the existence of a infinite number of conformal classes with at least two non-homothetic Riemannian metrics  of null scalar curvature and constant mean curvature of the boundary on the product manifold.
\end{abstract}
\maketitle

\section{Introduction}
Let $(M,g)$ be a compact Riemannian manifold without boundary of dimension $m\geq3.$ It is well know that the metrics of constant scalar curvature in the conformal class  $[g]$ of $g$ can be characterized variationally as critical points of the Hilbert-Einstein functional on the conformal class $[g].$ The existence of metrics with constant scalar curvature in conformal class $[g]$ was established trough the combined works of Yamabe \cite{yam}, Trudinger \cite{trud}, Aubin \cite{aubin} and Schoen \cite{schoen1}.

Once  the existence of constant scalar curvature metrics conformal to $g$ has been established, it is natural do ask  about its uniqueness. For instance, if the Yamabe constant $Y[M,[g]]$ defined as the minimum of the normalized Hilbert-Einstein functional on $[g]$ is non-positive then  there exists only one of these metrics (up to homotheties). Also, if  $g$ is an Einstein metric which is not conformal to the round metric on the sphere $\mathbb{S}^n$, there exists one unique  metric of   unit volume and constant scalar curvature  in $[g]$ by a result of Obata \cite{Obata}. However, Pollack  proved in \cite{pollack} that if the Yamabe constant $Y[M,[g]]$ is positive then there are conformal classes with an arbitrarily large number of metrics of unit volume and constant scalar curvature sufficiently $C^{0}$- close to $[g]$.

Bifurcation techniques have been successfully used to study multiplicity of solutions for the Yamabe problem, both in the compact and the nonimpact case, see for instance \cite{bettiol-piccione-santoro}, \cite{bettiol-piccione} and \cite{bettiol-piccione2}. In particular, Lima et al. in \cite{sobbif-pic} using bifurcation techniques proved a result of multiplicity in an infinite number of conformal classes on product manifolds.

For  compact Riemannian manifolds with boundary and dimension $m\geq3$ similar problems to those mentioned above have been studied by different authors. For instance, Escobar proved in \cite{escs} that almost every  compact Riemannian manifold with boundary  
is conformally equivalent to a with null scalar curvature and constant mean curvature on the boundary. Uniqueness results were obtained by Escobar in \cite{escu}. 

Motivated by the results in \cite{sobbif-pic} in this paper using the bifurcation theory we study the multiplicity of metrics with null scalar curvature and constant mean curvature on conformal class of a product metric. More precisely, consider $(M_1,g^{(1)})$ a compact Riemannian manifold, without boundary, null scalar curvature and $(M_2,g^{(2)})$ a compact Riemannian manifold with boundary, null scalar curvature and constant mean curvature. Consider now the product manifold $M=M_1\times M_2$ and the family of Riemannian metrics $(g_t)_{t>0}$ with null scalar curvature and constant mean curvature on $M$ defined by $g_t=g^{(1)}\oplus tg^{(2)}$. The main result of the paper (Theorem\,\ref{thm}) states that if $(M_2,g^{(2)})$ has positive constant curvature mean on the boundary, then  there is a sequence $(t_n)$
strictly decreasing tending to 0 such that every element of this sequence is a bifurcation instant of the family $(g_t)_{t>0}$. For all other value of $t$ the family $(g_t)_{t>0}$ is locally rigid.  The fact that every $t_n$ is a bifurcation instant gives an entirely new sequence $(g_{i})$ of Riemannian metrics with null scalar curvature and constant mean curvature on boundary such that each $g_{i}\in [g_{t}]$ for some $t$ close to $t_n$ with $g_{i}$ non-isometric to $g_t$. This proves multiplicity for an infinite number of conformal classes.

The paper is organized as follows: in section 2 we recall some basic fact about  variational characterization of the Riemannian metrics with null scalar curvature and constant mean curvature on boundary conformally related to a metric $g$. In section 3 we give an abstract result of bifurcation and we examine the convergence of a bifurcating branch.
Finally, in section 4 we show our main result.

\section{Variational setting of the Yamabe problem}
Let $(M,g)$ be a compact Riemannian manifold with boundary and $m=\text{dim}\,M\geq3$. We will denote by $\mathbb{H}^1(M)$ the space of functions in $\mathbb{L}^2(M)$ with first weak derivatives  in $\mathbb{L}^2(M)$.  Endowed with the inner product 
\[\langle\varphi_1,\varphi_2\rangle=\int_{M}\Bigl(g(\nabla\varphi_1,\nabla\varphi_2)+\varphi_1\varphi_2\Bigr)\,\upsilon_g,\ \quad\varphi_1,\varphi_2\in\mathbb{H}^1(M),\]
$\mathbb{H}^1(M)$ is a Hilbert space. Let $E:\mathbb{H}^1(M)\longrightarrow\mathbb{R}$ be defined by
\[E(\varphi)=\int_{M}\Bigl(g(\nabla\varphi,\nabla\varphi)+\frac{m-2}{4(m-1)}R_g\varphi^2\Bigr)\upsilon_g+\frac{m-2}{2}\int_{\partial\,M}H_g\varphi^2\,\sigma_g.\]It is well known in literature that $\varphi\in\mathbb{H}^1(M)$ is a critical point of $E$ under the constraint $\mathcal{C}=\{\varphi\in\mathbb{H}^1(M)|\int_{\partial\, M}\varphi^{\frac{2(m-1)}{m-2}}\,\sigma_g=1\}$ if and only if the conformal metric $\tilde{g}=\varphi^{\frac{4}{m-2}}g$ has null scalar curvature  and constant mean curvature, see for instance \cite[Proposition~2,1]{escs} .

\subsection{The manifold of the normalized harmonic functions }

To study the bifurcation of metrics with null scalar curvature and constant mean curvature we will need to characterize these as critical points of $E$ on a special submanifold of $\mathbb{H}^1(M)$. For this purpos,  remember that a  function $\varphi\in\mathbb{H}^1(M)$ is harmonic if
	\[\int_{M}g(\nabla\varphi,\nabla\phi)\,\upsilon_g=0,\] 
for all $\phi\in\mathbb{H}^1_0(M),$ where $\mathbb{H}^1_0(M)$ is the kernel of trace operator, i.e.,
\[\mathbb{H}^1_0(M)=\{\phi\in\mathbb{H}^1(M)\Bigl|\,\phi|_{\partial M}=0\}.\]  

Let $\mathbb{H}^1_{\Delta}(M)$ denote the subspace of $\mathbb{H}^1(M)$ given by all harmonic functions. We know that $\mathbb{H}^1(M)=\mathbb{H}^1_0(M)\oplus\mathbb{H}^1_{\Delta}(M)$. Hence, $\mathbb{H}^1_{\Delta}(M)$ is a closed subspace  of Hilbert space $\mathbb{H}^1(M)$, and therefore  an embedded submanifold of it. By the Sobolev embedding,  there is a continuous inclusion $\mathbb{H}^1(M)\subset\mathbb{L}^{p}(\partial M)$ with  $p=\frac{2(m-1)}{m-2}$, see for instance \cite[Th. 2.21, pg. 19]{tedp}.
Let $\mathcal{H}_1(M,g)$ denote the subset of $\mathbb{H}^1_{\Delta}(M)$ consisting of those functions $\varphi$ such that $\int_{\partial M}\varphi^{\frac{2(m-1)}{m-2}}\,\sigma_g=1.$


\begin{proposition}\label{vsy}\hfill
\begin{enumerate}
\item $\mathcal{H}_1(M,g)$ is an embedded codimension 1 submanifold of   $\mathbb{H}^1_{\Delta}(M)$. In addition, for $\varphi\in\mathcal{H}_1(M,g)$ the tangent space $T_{\varphi}\mathcal{H}_1(M,g)$ of $\mathcal{H}_1(M,g)$ in $\varphi$ is given by
\begin{equation}\label{ts}
T_{\varphi}\mathcal{H}_1(M,g)=\Bigl\{\phi\in\mathbb{H}^1_{\Delta}(M)\Bigl|\ \int_{\partial M}\varphi^{\frac{m}{m-2}}\phi\,\sigma_g=0\Bigr\}.\end{equation}
\item Suppose that the metric $g$ has null scalar curvature. Then $\varphi\in\mathcal{H}_1(M,g)$ is a critical point of $E$ on $\mathcal{H}_1(M,g)$ if and only if $\varphi$ is a smooth function, positive and the conformal metric $\tilde{g}=\varphi^{\frac{4}{m-2}} g$ has null scalar curvature 
and $\partial M$ has constant $\tilde g$-mean curvature.\footnote{%
With a slight abuse of terminology, we will say that a metric $g$ on a manifold $M$ with boundary \emph{has constant mean curvature} meaning that the boundary $\partial M$ has constant mean curvature with respect to $g$.}\\

 Assume  that $\int_{\partial M}\sigma_g=1,$ and  $\varphi_0=\mathbf 1$\footnote{here $\mathbf 1$ denote the constant function 1 on $M$} 
 is a critical point of $E$ on $\mathcal{H}_1(M,g)$ (i.e., if $g$ has  null scalar curvature and constant mean curvature) then: 
 \item The second variation $d^2 E(\varphi_0)$  of $E$ at $\varphi_0$ is given by
\begin{equation}\label{sgv}
(\phi_1,\phi_2)\longrightarrow2\Bigl\{\int_{M}g(\nabla\phi_1,\nabla\phi_2)\,\upsilon_g-H_g\int_{\partial M}\phi_1\phi_2\,\sigma_g\Bigr\},\end{equation}
where $\phi_i\in \mathbb{H}^1_{\Delta}(M)$ is such that $\int_{\partial M}\phi_i\,\sigma_g=0,\ i=1,2.$
\item There exists a (unbounded) self-adjoint operator $J_g:\mathbb{L}_2(\partial M)\longrightarrow\mathbb{L}_2(\partial M)$ such that
\begin{equation}\label{svjo}
d^2 E(\varphi_0)(\phi_1,\phi_2)=2\int_{\partial M}J_g(\phi_1|_{\partial M})\phi_2\,\sigma_g,\end{equation}
where $\phi_i\in\mathbb{H}^1_{\Delta}(M)$ satisfies $\int_{\partial M}\phi_i\,\sigma_g=0,\ i=1,2.$
\end{enumerate}

\end{proposition}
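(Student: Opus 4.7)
My approach treats $\mathcal{H}_1(M,g)$ as a regular level set, characterizes critical points via a Lagrange multiplier, and then assembles the second variation by combining the free Hessian of $E$ with the Lagrange correction.

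For part (1), I would define $F\colon\mathbb{H}^1_{\Delta}(M)\to\mathbb{R}$ by $F(\varphi)=\int_{\partial M}\varphi^{q}\,\sigma_g$ with $q=\tfrac{2(m-1)}{m-2}$. By the continuous trace $\mathbb{H}^1(M)\hookrightarrow\mathbb{L}^{q}(\partial M)$ cited in the text, $F$ is well defined and smooth, with $dF(\varphi)\phi=q\int_{\partial M}\varphi^{q-1}\phi\,\sigma_g$. For any $\varphi\in F^{-1}(1)$ one has $\varphi\not\equiv 0$ on $\partial M$, whence $dF(\varphi)\neq 0$ (test against a harmonic extension of a suitable boundary function). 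The submersion theorem then yields that $\mathcal{H}_1(M,g)=F^{-1}(1)$ is an embedded codimension-one submanifold with tangent space $\ker dF(\varphi)$, which is exactly \eqref{ts} after noting that $q-1=\tfrac{m}{m-2}$.

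For part (2), I would apply the Lagrange multiplier principle: $\varphi$ is critical on $\mathcal{H}_1(M,g)$ iff there exists $\lambda\in\mathbb{R}$ with $dE(\varphi)=\lambda\,dF(\varphi)$ on $\mathbb{H}^1_{\Delta}(M)$. Since $R_g=0$, $dE(\varphi)\phi=2\int_M g(\nabla\varphi,\nabla\phi)\,\upsilon_g+(m-2)\int_{\partial M}H_g\varphi\phi\,\sigma_g$. Using harmonicity of $\varphi$, Green's identity rewrites the bulk term as $2\int_{\partial M}(\partial_\nu\varphi)\phi\,\sigma_g$. Testing against arbitrary $\phi\in\mathbb{H}^1_{\Delta}(M)$ (these trace surjectively onto smooth boundary data) yields a nonlinear Neumann-type boundary equation for $\partial_\nu\varphi$. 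Standard elliptic boundary regularity for the Laplacian with this nonlinear Neumann condition boots $\varphi$ up to smoothness, and a Hopf-boundary maximum principle argument (exactly as in Escobar \cite{escs}) gives $\varphi>0$. Finally, the conformal transformation law for the mean curvature under $\tilde g=\varphi^{4/(m-2)}g$ identifies the boundary equation with $H_{\tilde g}=\mathrm{const}$, while $R_g=0$ plus harmonicity of $\varphi$ gives $R_{\tilde g}=0$ directly from the conformal scalar curvature formula.

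For parts (3) and (4), specialize to $\varphi_0=\mathbf 1$ with $\int_{\partial M}\sigma_g=1$. Computing the ambient Hessian yields $d^2E(\varphi_0)(\phi_1,\phi_2)=2\int_M g(\nabla\phi_1,\nabla\phi_2)\,\upsilon_g+(m-2)H_g\int_{\partial M}\phi_1\phi_2\,\sigma_g$, and $d^2F(\varphi_0)(\phi_1,\phi_2)=q(q-1)\int_{\partial M}\phi_1\phi_2\,\sigma_g=\tfrac{2m(m-1)}{(m-2)^2}\int_{\partial M}\phi_1\phi_2\,\sigma_g$. The Lagrange multiplier at $\varphi_0$ comes out to $\lambda=\tfrac{(m-2)^2}{2(m-1)}H_g$, and subtracting $\lambda\,d^2F(\varphi_0)$ causes the boundary coefficient to collapse from $(m-2)H_g$ to $-2H_g$, producing exactly \eqref{sgv} on the tangent space $\{\phi:\int_{\partial M}\phi\,\sigma_g=0\}$. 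For (4), harmonicity of $\phi_1$ together with Green's identity rewrites the bulk gradient integral as $\int_{\partial M}(\partial_\nu\phi_1)\phi_2\,\sigma_g$, so defining $J_g f:=\mathcal{D}_g f-H_g f$, where $\mathcal{D}_g$ is the Dirichlet-to-Neumann operator associated with $g$, yields \eqref{svjo}; self-adjointness of $J_g$ on $\mathbb{L}^2(\partial M)$ follows from that of $\mathcal{D}_g$, which in turn is a consequence of the symmetry of the Dirichlet bilinear form $(u,v)\mapsto\int_M g(\nabla\tilde u,\nabla\tilde v)\,\upsilon_g$ on harmonic extensions.

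The main obstacle is the regularity and positivity step in part (2): one needs to pass from a weak harmonic function satisfying a nonlinear Neumann-type boundary condition to a smooth positive solution. The arithmetic in (3) is delicate but mechanical, while (1) and (4) reduce to standard functional-analytic facts once the framework is in place.
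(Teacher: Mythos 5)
Your proposal is correct and follows essentially the same route as the paper: a regular level-set/submersion argument for (1), reduction to Escobar's characterization (Lagrange multipliers plus elliptic regularity and the Hopf boundary point lemma) for (2), the constrained second variation for (3), and the Dirichlet-to-Neumann operator $\mathcal{N}_g$ with $J_g=\mathcal{N}_g-H_g$ for (4). Your explicit arithmetic in (3) — multiplier $\lambda=\tfrac{(m-2)^2}{2(m-1)}H_g$ and the collapse of the boundary coefficient from $(m-2)H_g$ to $-2H_g$ — correctly fills in what the paper dismisses as ``a straightforward computation based on the method of Lagrange multipliers.''
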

\begin{proof}
Consider the function $\mathcal{V}_g:\mathbb{H}^1_{\Delta}(M)\longrightarrow\mathbb{R}$ defined by
\[\mathcal{V}_g(\varphi)=\int_{\partial M}\varphi^{\frac{2(m-1)}{m-2}} \,\sigma_g.\]
In order to prove (1), it is sufficient to show that $\mathcal{V}_g$  is a submersion at $\varphi,$ for all $\varphi\in\mathcal{H}_1(M,g)$. Note than $\mathcal{V}_g$ is smooth and its differential in $\varphi\in\mathcal{H}_1(M,g)$ is given by $d\mathcal{V}_g(\varphi)\phi=\frac{2(m-1)}{m-2}\int_{\partial M}\varphi^{\frac{m}{m-2}}\phi\,\sigma_g$, for all $\phi\in \mathbb{H}^1_{\Delta}(M)$. In particular, for $\phi=\varphi$ we get $d\mathcal{V}_g(\varphi)\phi=\frac{2(m-1)}{m-2}>0,$ therefore $d\mathcal{V}_g(\varphi)$ is surjective. Clearly the kernel of $d\mathcal{V}_g(\varphi)$ is complemented in $\mathbb{H}^1_{\Delta}(M)$, hence $\mathcal{V}_g$ is a submersion. Now for $\varphi\in\mathcal{H}_1(M,g)$, the tangent space $T_\varphi\mathcal{H}_1(M,g)$ of $\mathcal{H}_1(M,g)$ in $\varphi$ is the kernel of $d\mathcal{V}_g(\varphi)$ which establishes \eqref{ts}.

If the metric $g$ has null scalar curvature,  it is easy to see that $\varphi\in\mathcal{H}_1(M,g)$ is a critical point of $E$ on $\mathcal{H}_1(M,g)$ if and only if it is a critical point of $E$ under the constraint $\mathcal{C}=\{\varphi\in\mathbb{H}^1(M)|\int_{\partial\, M}\varphi^{\frac{2(m-1)}{m-2}}\,\sigma_g=1\}$. This proves (2). Assume $\int_{\partial M}\,\sigma_g=1$ and let $\varphi_0=\mathbf1$ be a critical point of $E$ on $\mathcal{H}_1(M,g)$. The formula \eqref{sgv}  is a straightforward computation  based on the method of Lagrange multipliers. 

Finally, given $\phi_1,\, \phi_2\in\mathbb{H}^1(M)$ we have

\[\int_{M}g(\nabla\phi_1,\nabla\phi_2)\,\upsilon_g=\int_{\partial M}\mathcal{N}_g(\phi_1|_{\partial M})\phi_2\,\sigma_g,\]
where $\mathcal{N}_g$ denotes the Direchlet-Neumann map, see for instance \cite{mazzeo-ar}. Hence from \eqref{sgv} we have
\[d^2E(\varphi_0)(\phi_1,\phi_2)=2\int_{\partial M}(\mathcal{N}_g(\phi_1|_{\partial M})-H_g\phi_1)\phi_2\sigma_g,\]
which proves (3) with $J_g(\phi_1)=\mathcal{N}_g(\phi_1|_{\partial M})-H_g\phi_1$.
 

%
\end{proof}

We will call $\mathcal{H}_1(M,g)$ the manifold of normalized harmonic functions.
\subsection{Jacobi Operator}

The unbounded linear operator $J_g:\mathbb{L}^2(\partial M)\longrightarrow\mathbb{L}^2(\partial M)$ defined by
\[J_g(\phi)=\mathcal{N}_g(\phi)-H_g\phi\]
is called the Jacobi operator. From the equation \eqref{svjo} follows that the dimension of $\text{Ker}J_g$ and the number (counted with multiplicity) of negative eigenvalues of $J_g$ are the nullity and the Morse index of $\varphi_0$ as a critical point of $d^2E$ in $\mathcal{H}_1(M,g)$. The spectrum of $J_g$ restricted to $\mathbf{L}_0(M)$, where
\[\mathbf{L}_0(M)=\{\phi\in\mathbb{L}^2(\partial M)|\ \phi=\varphi|_{\partial M},\ \varphi\in\mathbb{H}^1_{\Delta}(M)\},\]
is given by
\[-H_g<\rho_1(\mathcal{N}_g)-H_g\leq\rho_2(\mathcal{N}_g)-H_g\leq\rho_3(\mathcal{N}_g)-H_g\leq\cdots,\]
where the $\rho_{j}(\mathcal{N}_g)$ are repeated according to multiplicity. This proves the following
\begin{proposition}
Assume that $\varphi_0\equiv\mathbf1$ is a critical point of $E:\mathcal{H}_1(M,g)\longrightarrow\mathbb{R}$. Then:
\begin{enumerate}
	\item The Morse index $i(g)$ of $\varphi_0$ is given by
	\[i(g)=\max\{j|\rho_j(\mathcal{N}_g)<H_g\}.\]
	\item The nully of $\varphi_0,$ denoted with $\nu(g)$, is
	\[\nu(g)=\text{dim}\,\text{Ker}J_g.\]
\end{enumerate}	
	\end{proposition}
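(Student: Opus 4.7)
The plan is to translate the Morse index and nullity of $\varphi_0$ into spectral data of the Jacobi operator $J_g$, and then read those data off from the known spectrum of the Dirichlet-to-Neumann operator $\mathcal{N}_g$.

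First, I would set $\varphi=\varphi_0=\mathbf 1$ in Proposition \ref{vsy}(1), so that $T_{\varphi_0}\mathcal{H}_1(M,g)$ is the space of harmonic functions with zero boundary integral. Because a harmonic function is determined by its trace on $\partial M$, restriction to the boundary gives a linear isomorphism between this tangent space and the zero-mean subspace of $\mathbf{L}_0(M)$. The formula \eqref{svjo} then represents $d^2E(\varphi_0)$ on this subspace as twice the quadratic form $\langle J_g f_1,f_2\rangle_{\mathbb{L}^2(\partial M)}$, so the Morse index and the nullity of $\varphi_0$ coincide respectively with the negative eigenvalue count and the kernel dimension of $J_g$ restricted to zero-mean traces.

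Second, I would invoke standard spectral theory for $\mathcal{N}_g$: it is a self-adjoint, non-negative, first-order elliptic pseudo-differential operator on the closed manifold $\partial M$, hence has purely discrete spectrum $0=\rho_0<\rho_1\leq\rho_2\leq\cdots\to+\infty$, with the eigenvalue $0$ simple and spanned by the constants (since constants extend to the harmonic function $\mathbf 1$ on $M$). Restricting to the zero-mean subspace removes exactly this eigenspace, so $J_g=\mathcal{N}_g-H_g\,\mathrm{Id}$ has eigenvalues $\rho_j(\mathcal{N}_g)-H_g$ for $j\geq 1$ on this subspace, which is the sequence displayed in the excerpt. A brief consistency check confirms that $\mathrm{Ker}\,J_g$ already sits in the zero-mean subspace when $H_g\neq 0$: pairing $\mathcal{N}_g\phi=H_g\phi$ against $\mathbf 1$ in $\mathbb{L}^2(\partial M)$ and using self-adjointness of $\mathcal{N}_g$ together with $\mathcal{N}_g\mathbf 1=0$ yields $H_g\int_{\partial M}\phi\,\sigma_g=0$.

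Finally, the conclusion is a direct count. The Morse index is the dimension of a maximal subspace on which the Hessian is negative definite, which by self-adjointness of $J_g$ equals the number (with multiplicity) of $j$ with $\rho_j(\mathcal{N}_g)-H_g<0$; since the $\rho_j$ are non-decreasing, this number equals $\max\{j:\rho_j(\mathcal{N}_g)<H_g\}$, proving (1). The nullity is the dimension of the degeneracy subspace of the Hessian, which under the trace identification coincides with $\dim\,\mathrm{Ker}\,J_g$, proving (2). The only substantive input is the discrete spectrum and pseudo-differential character of $\mathcal{N}_g$; this is classical and is precisely where one cites a reference such as \cite{mazzeo-ar}. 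In short, nothing in this proposition is hard once Proposition \ref{vsy} is in hand, and the main obstacle, such as it is, is simply keeping track of which space the operator acts on so that the passage from the abstract Hessian to the shifted Dirichlet-to-Neumann spectrum is legitimate.
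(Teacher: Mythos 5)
Your proposal is correct and follows essentially the same route as the paper: identify the Hessian at $\varphi_0=\mathbf 1$ with the quadratic form of $J_g=\mathcal{N}_g-H_g$ on the zero-mean traces of harmonic functions via \eqref{svjo}, then read off the index and nullity from the shifted Steklov spectrum $\rho_j(\mathcal{N}_g)-H_g$, $j\geq1$. Your explicit check that $\mathrm{Ker}\,J_g$ lies in the zero-mean subspace when $H_g\neq0$ is a small addition that the paper only handles implicitly in its remark, but it does not change the argument.
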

\begin{remark}
	Observe that $-H_g$ is not included in the spectrum of $J_g|_{\mathbf{L}_0(M)}$ since the only constant function on $\mathbf{L}_0(M)$ is the function null. We also recall the well known fact that eigenfunctions of $J_g$ are smooth and form an orthonormal basis of $\mathbf{L}_0(M)$.
\end{remark}
\subsection{Manifold of normalized Riemannian metrics on $M$}
For $k\ge2$, let $\mathbb{S}^{k}(M)$ denote the space of $\mathcal{C}^{k}-$sections of the vector bundle $T^*M\otimes T^*M$ of symmetric (0,2)-tensors of class $\mathcal{C}^{k}$ on $M$. $\mathbb{S}^{k}(M)$ has natural structure of Banach space. The set of all Riemannian metrics of class $\mathcal{C}^{k}$ on $M$ $\text{Met}\,^{k}(M)$ is an open subset of $\mathbb{S}^{k}(M)$ and hence an embedding  submanifold of it. For all $g\in\text{Met}\,^{k}(M)$ the tangent space $T_g\text{Met}\,^{k}(M)$ of $\text{Met}\,^{k}(M)$ in $g$ is given by $T_g\text{Met}\,^{k}(M)=\mathbb{S}^{k}(M)$ (see \cite{clarke} and the reference given there for more details on the manifold $\text{Met}\,^{k}(M)$).
 
 Let $\mathcal{V}:\text{Met}\,^{k}(M)\longrightarrow\mathbb{R}$ be defined by 
 \[\mathcal{V}(g)=\int_{\partial M}\sigma_{g}.\]
 It is easy to check that $\mathcal{V}$ is a submersion and therefore \[\mathcal{M}=\biggl\{g\in\text{Met}\,^{k}(M)\Bigl|\  \int_{\partial M}\sigma_{g}=1\biggr\},\]
 it  is an embedding submanifold of $\text{Met}\,^{k}(M)$. We will call  $\mathcal{M}$ the manifold of normalized Riemannian metrics on $M$.
 
 \section{Bifurcation of solutions and convergence of bifurcation branches}
 Let us consider a smooth curve  $[a,b]\ni t\longrightarrow g_t\in\mathcal{M}$ 
 with $\varphi_0=\mathbf1$ is a critical point of $E$ on $\mathcal{H}_1(M,g_t)$ for all $t\in[a,b]$
 (i.e., $g_t$ has null scalar curvature and constant mean curvature for all $t$).
 
 \begin{definition} 
 Let  $[a,b]\ni t\longrightarrow g_t\in\mathcal{M}$ be a curve as above. An element $t_*\in[a,b]$ said to be a bifurcation instant for the family  $(g_t)_{t\in[a,b]}$ if there exists a sequence $(t_n)\subset[a,b]$ and a sequence $(\varphi_n)\subset\mathbb{H}^1(M)$ such that
 \begin{enumerate}
 	\item[(\text{a})] $\varphi_n$ is a critical point of $E$ on $\mathcal{H}_1(M,g_{t_n})$ for all $n$;
 	\item[(\text{b})] $\varphi_n\neq\mathbf 1$ for all $n$;
 	\item[(\text{c})] $t_n\longrightarrow t_{*}$ as $n\longrightarrow+\infty$;
 	\item[(\text{d})]$\varphi_n\longrightarrow\mathbf1$ as $n\longrightarrow+\infty$ on $\mathbb{H}^1(M)$.
 	\end{enumerate}
 If $t_*\in[a,b]$ is not a bifurcation instant, then we say that the family $(g_t)_{t\in[a,b]}$ is locally rigid at $t_*$.
  \end{definition}

 Given $g\in\mathcal{M}$ with null scalar curvature  and constant mean curvature $H_g$, we say that $g$ is  nondegenerate if  either $H_g=0$ or  $H_g$ is not an eigenvalue of $\mathcal{N}_g$. The following bifurcation criterion is used to guarantee the conclusion of this work. 
 
 \begin{theorem}\label{scb}
 	Let $M$ be a compact manifold with boundary, with $m=\text{dim}\,M\geq3$. Let $[a,b]\ni t\longrightarrow g_t\in\mathcal{M}$ be a smooth curve of Riemannian metrics, with zero scalar curvature and constant mean curvature  for all 
 	$t\in[a,b]$. Given $t\in[a,b]$ let us denote by $\mathfrak{n}_t$ the number of  eigenvalues (counted with multiplicity) of $\mathcal{N}_{g_t}$ that are less than $H_{g_t}$. Suppose that:
 	\begin{enumerate}
 		\item The metrics $g_a$ and $g_b$ are nondegenerate,
 		\item $\mathfrak{n}_a\neq\mathfrak{n}_b$.
 	\end{enumerate}
 	Then, there exists a bifurcation instant $t_*\in\, ]a,b[$ for the family $(g_t)_{t\in[a,b]}$.
 \end{theorem}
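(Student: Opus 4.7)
The strategy is to recast the problem as one of finding zeros of a $t$-parameter family of gradient maps on a fixed Hilbert space, and then to apply a variational bifurcation theorem based on a jump in the Morse index of the trivial branch $\varphi_0 \equiv \mathbf{1}$.

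First, I would trivialize the $t$-dependence of the ambient manifolds $\mathcal{H}_1(M, g_t)$. For each $t$, the constraint $\int_{\partial M}\varphi^{2(m-1)/(m-2)}\sigma_{g_t}=1$ is, by Proposition~\ref{vsy}(1) and the smooth dependence of $\sigma_{g_t}$ on $t$, a transverse smooth equation in $(t, \varphi)$. An application of the implicit function theorem yields a smooth family of local diffeomorphisms $\Phi_t: U \to \mathcal{H}_1(M, g_t)$, where $U$ is a fixed open neighborhood of $0$ in a fixed closed subspace of $\mathbb{H}^1_\Delta(M)$, with $\Phi_t(0) = \mathbf{1}$. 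Pulling back, $F(t, \phi) := E(\Phi_t(\phi))$ is a smooth family of $C^2$-functionals on $U$ whose critical points correspond bijectively to critical points of $E$ on $\mathcal{H}_1(M, g_t)$; in particular $\phi = 0$ is the trivial critical point for every $t \in [a,b]$.

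Next, I would identify the Hessian of $F(t, \cdot)$ at $\phi = 0$. Because $\Phi_t(0) = \mathbf{1}$ and $d\Phi_t(0)$ is the canonical inclusion of $T_{\mathbf 1}\mathcal{H}_1(M,g_t)$ (suitably trivialized), Proposition~\ref{vsy}(3)-(4) identifies this Hessian, under the trace map $\varphi \mapsto \varphi|_{\partial M}$, with the Jacobi operator $J_{g_t} = \mathcal{N}_{g_t} - H_{g_t}\,\mathrm{Id}$ acting on $\mathbf{L}_0(M)$. In particular, the Morse index of $F(t, \cdot)$ at $0$ is exactly $\mathfrak{n}_t$, and the nondegeneracy assumption at $t = a, b$ translates to invertibility of the Hessian at those parameters. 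Since $\mathcal{N}_{g_t}$ is an elliptic first-order self-adjoint operator on $\partial M$ with compact resolvent, the gradient $\nabla F(t, \cdot)$ is a $C^1$ compact perturbation of the identity on the fixed Hilbert space, and its linearization at $0$ is a self-adjoint Fredholm operator of index zero depending continuously on $t$.

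Finally, I would argue by contradiction. If no bifurcation instant existed in $(a, b)$, then shrinking $U$ one could ensure that $0$ is the only zero of $\nabla F(t, \cdot)$ in a ball $B_\varepsilon \subset U$ for every $t \in [a, b]$. By homotopy invariance of the Leray-Schauder degree, $\deg(\nabla F(t, \cdot), B_\varepsilon, 0)$ would be independent of $t$; at the endpoints, where the linearization is invertible, this degree equals $(-1)^{\mathfrak{n}_t}$, producing an immediate contradiction whenever $\mathfrak{n}_a$ and $\mathfrak{n}_b$ differ in parity. The general case (same parity, different indices) is handled by the variational refinement: since $\nabla F(t, \cdot)$ is the gradient of the $C^2$-functional $F(t, \cdot)$ with self-adjoint Fredholm Hessian, a Morse-index-jump bifurcation theorem (in the spirit of Fitzpatrick--Pejsachowicz--Recht or Smoller--Wasserman, as used in \cite{sobbif-pic,bettiol-piccione}) forces a bifurcation instant whenever the Morse index changes.

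The principal obstacle will be the bookkeeping required to make the trivialization $\Phi_t$ rigorous while preserving the variational structure in a $t$-uniform fashion, together with the verification that $\nabla F(t, \cdot)$ really is of the form $\mathrm{Id} + (\text{compact})$ with a self-adjoint Fredholm linearization. Once this is established, the invocation of the variational bifurcation criterion is essentially formal; the genuine analytic input is concentrated in the smooth dependence on $t$ and in the Fredholm/compactness properties of the Dirichlet-Neumann map, both of which are standard but require careful justification.
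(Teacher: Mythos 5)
Your proposal is correct and follows essentially the same route as the paper: the paper's entire proof is a one-line appeal to the non-equivariant bifurcation theorem of De Lima--Piccione--Zedda (\cite{sobbif-pic}, Theorem A.2), and your outline --- trivializing the family of constraint manifolds $\mathcal{H}_1(M,g_t)$, identifying the Hessian at the trivial branch with the Jacobi operator $\mathcal{N}_{g_t}-H_{g_t}$ via Proposition~\ref{vsy}, checking the self-adjoint Fredholm/compact-perturbation structure, and concluding from the jump $\mathfrak{n}_a\neq\mathfrak{n}_b$ via a Morse-index-jump variational bifurcation criterion --- is precisely the content of that cited theorem together with the verification of its hypotheses in this setting. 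No genuinely different idea is involved.
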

  \begin{proof}
  	The result follows from the non-equivariant bifurcation theorem \cite[Theorem A.2]{sobbif-pic}
  	  	\end{proof}
    	
  \begin{remark}\label{ccbf}
  	It follows from implicit function theorem that if $t_*$ is a bifurcation instant for the family $(g_t)_{t\in[a,b]}$, then the mean curvature $H_{g_{t_*}}$ of the metric $g_{t_*}$ is a nonzero  eigenvalue of $\mathcal{N}_{g_{t_*}}$, see for instance \cite[Prop. 3.1]{sobbif-pic}. An instant $t\in[a,b]$ such that the mean curvature $H_{g_t}$ is a  nonzero eigenvalue  of $\mathcal{N}_{g_t}$ is called degeneracy instant of $(g_t)_{t\in[a,b]}$.
  \end{remark}

  \subsection[Convergence of bifurcation branches]{Convergence of bifurcation branches}	
  	
  Our purpose in this section is to settle the type of convergence of a bifurcating branch of solutions of the Yamabe problem in manifolds with boundary. 
  
  \begin{proposition}
  	Let $(g_t)_{t\in[a,b]}$ be a family of Riemannian metrics as above. If $t_*$ is a bifurcation instant for the family $(g_t)_{t\in[a,b]}$ then
  	\begin{enumerate}
  		\item $H_n\longrightarrow H_{t_{*}}$ as $n\longrightarrow+\infty$, where $H_n$ denotes the mean curvature of the metric $g_n=\varphi_{n}^{\frac{4}{m-2}}g_{t_{n}}$;
  		\item If $m\geq4$ then $\varphi_n\longrightarrow\mathbf1$ in $\mathbb{W}^{s,p}$ for all integer s and $p=\frac{2(m-2)}{m}$;
  		\item If $m\geq4$ then $\varphi_n\longrightarrow\mathbf1$ on $\mathcal{C}^{s}(\overline{M})$ for all integer $s$.
  	\end{enumerate}
  	\end{proposition}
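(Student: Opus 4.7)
The plan is to exploit the Euler--Lagrange characterization of $\varphi_n$ and bootstrap. By Proposition~\ref{vsy}(2), each $\varphi_n$ is smooth and positive, and $g_n=\varphi_n^{4/(m-2)}g_{t_n}$ has zero scalar curvature and constant mean curvature $H_n$. Since $g_{t_n}$ itself is scalar-flat, the conformal transformation formulas collapse to
\[
\Delta_{g_{t_n}}\varphi_n = 0 \text{ in } M,\qquad
\tfrac{2}{m-2}\,\partial_{\nu_{g_{t_n}}}\varphi_n + H_{g_{t_n}}\varphi_n \;=\; H_n\,\varphi_n^{m/(m-2)} \text{ on } \partial M,
\]
together with the normalization $\int_{\partial M}\varphi_n^{2(m-1)/(m-2)}\,\sigma_{g_{t_n}}=1$.

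For (1), I would test the boundary equation against $\varphi_n$ over $\partial M$. The divergence theorem combined with harmonicity of $\varphi_n$ turns $\int_{\partial M}(\partial_\nu\varphi_n)\varphi_n\,\sigma_{g_{t_n}}$ into $\int_M|\nabla_{g_{t_n}}\varphi_n|^2\,\upsilon_{g_{t_n}}$, while $\varphi_n^{m/(m-2)}\cdot\varphi_n=\varphi_n^{2(m-1)/(m-2)}$ integrates to exactly $1$ by the normalization. Solving for $H_n$ yields
\[
H_n \;=\; H_{g_{t_n}}\int_{\partial M}\varphi_n^2\,\sigma_{g_{t_n}} \;+\; \tfrac{2}{m-2}\int_M|\nabla_{g_{t_n}}\varphi_n|^2\,\upsilon_{g_{t_n}},
\]
and the right-hand side converges to $H_{g_{t_*}}\cdot 1 + 0 = H_{t_*}$, using $\varphi_n\to\mathbf 1$ in $\mathbb{H}^1(M)$, the trace inequality on $\partial M$, and the smoothness of $t\mapsto g_t$.

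For (2), I would run a bootstrap on the elliptic boundary value problem above. Writing $\varphi_n=\mathbf 1+u_n$ with $u_n\to 0$ in $\mathbb{H}^1$, the boundary datum is
\[
\tfrac{2}{m-2}\,\partial_\nu u_n \;=\; H_n(\mathbf 1+u_n)^{m/(m-2)}-H_{g_{t_n}}(\mathbf 1+u_n),
\]
whose $\mathbb{L}^p$ norm on $\partial M$ is controlled, via the Sobolev trace $\mathbb{H}^1(M)\hookrightarrow\mathbb{L}^{2(m-1)/(m-2)}(\partial M)$, for $p=\tfrac{2(m-2)}{m}$ -- indeed this is the H\"older dual that makes $\varphi_n^{m/(m-2)}\in\mathbb{L}^p(\partial M)$ as soon as $\varphi_n|_{\partial M}\in\mathbb{L}^{2(m-1)/(m-2)}$. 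The standard $\mathbb{L}^p$ elliptic estimate for the Laplacian with oblique/Robin boundary condition $B_{g_{t_n}}=\frac{2}{m-2}\partial_\nu+H_{g_{t_n}}$ then gives $u_n\to 0$ in $\mathbb{W}^{2,p}(M)$. Iterating, each step lifts the regularity of the nonlinear boundary datum by one derivative, producing convergence in every $\mathbb{W}^{s,p}$. Part (3) follows from (2) and the Sobolev embedding $\mathbb{W}^{s,p}(M)\hookrightarrow\mathcal{C}^k(\overline M)$ once $s$ is large enough relative to $k,p,m$.

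The main obstacle will be the critical nature of the boundary nonlinearity $\varphi_n^{m/(m-2)}$, which sits exactly at the Sobolev borderline for the $\mathbb{H}^1$-trace and prevents a naive bootstrap from starting. The remedy is to use smallness of $u_n=\varphi_n-\mathbf 1$: the linear part of the Robin problem is a topological isomorphism onto the correct slice (by the nondegeneracy of the Dirichlet--Neumann map away from $-H_{g_{t_*}}$, which holds once $n$ is large), and the nonlinear excess is a superlinear perturbation vanishing with $\|u_n\|_{\mathbb{H}^1}$ and so can be absorbed. The hypothesis $m\geq 4$ enters precisely here: it makes $\dim\partial M=m-1\geq 3$ so that the Sobolev exponents $2(m-1)/(m-2)$ and $2(m-2)/m$ align favorably for the first step, whereas $m=3$ would require a separate low-dimensional argument.
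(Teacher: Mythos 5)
Your proposal is correct and follows essentially the same route as the paper: the integration-by-parts identity $H_n=\tfrac{2}{m-2}\int_M|\nabla\varphi_n|^2+H_{g_{t_n}}\int_{\partial M}\varphi_n^2$ for part (1), and an Agmon--Douglis--Nirenberg bootstrap in $\mathbb{W}^{s,p}$ with $p=\tfrac{2(m-2)}{m}$, controlling the boundary nonlinearity $\varphi_n^{m/(m-2)}$ by H\"older from the $\mathbb{H}^1$-convergence, for parts (2)--(3). The only slight imprecision is your explanation of the hypothesis $m\geq4$: its actual role is to guarantee $p=\tfrac{2(m-2)}{m}\geq1$ so that the subcritical $\mathbb{L}^p$-spaces used to start the bootstrap make sense, not the dimension of $\partial M$.
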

  \begin{proof}
  	By Proposition \ref{vsy} for each $n\in\mathbb{N}$, the conformal metric $g_n=\varphi^{\frac{4}{m-2}}g_{t_n}$ has null scalar curvature  and constant mean curvature $H_{n}$. As $\varphi_n$ satisfies the equations
  	\begin{equation}\label{sc}
  		\begin{cases}
  		\Delta_{g_{t_n}}\varphi_n=0&\quad \text{in}\ M,\\[.2cm]
  		\dfrac{\partial\varphi_n}{\partial\eta^{t_n}}+\frac{m-2}{2}H_{g_{t_n}}\varphi_n=\frac{m-2}{2}H_n\varphi_n^{\frac{m}{m-2}}&\quad\text{on}\ \partial M,\\
  		\end{cases}
  		\end{equation}
partial integration in \eqref{sc} yields:
  	\begin{equation}\label{cmc}
  H_n=\frac{2}{m-2}\int_{M}g_{t_n}(\nabla^{t_n}\varphi_n,\nabla^{t_n}\varphi_n)\,\upsilon_{g_{t_n}}+H_{g_{t_n}}\int_{\partial M}\varphi_n^2\,\sigma_{g_{t_n}},
	 	\end{equation}
  	where $\nabla^{t_n}$ denotes the gradient operator calculated  respect to the metric $g_{t_n}$. Since $g_{t_n}\longrightarrow g_{t_*}$ in the $\mathcal{C}^{k}$-topology then $H_{g_{t_n}}\longrightarrow H_{g_{t_*}}$, $\upsilon_{g_{t_n}}=\psi_n\upsilon_{g_{t_*}}$ and $\sigma_{g_{t_n}}=\hat{\psi}_n\sigma_{g_{t_*}}$ where $\psi_n\longrightarrow\mathbf 1$ and
  	$\hat{\psi}_n\longrightarrow\mathbf1$ in the $\mathcal{C}^{k}$-topology. Moreover, since  $\varphi_n\longrightarrow\mathbf 1$ in $\mathbb{H}^1(M)$ then $$\int_{M}\sum\limits_{i,j=1}^{m}\frac{\partial\varphi_n}{\partial x^i}\frac{\partial\varphi_n}{\partial x^j}\,\upsilon_{g_{t_*}}\longrightarrow0\quad \text{as}\quad n\longrightarrow+\infty.$$
  	Hence 
  	\[\int_{M}g_{t_n}(\nabla^{t_n}\varphi_n,\nabla^{t_n}\varphi_n)\,\upsilon_{g_{t_n}}\longrightarrow0\quad \text{as}\quad n\longrightarrow+\infty.\]
  	On the other hand the continuity of the embedding $\mathbb{H}^1(M)$ into $\mathbb{L}^2(\partial M)$ yields
  	\[\int_{\partial M}\varphi_n^2\,\sigma_{g_{t_*}}\longrightarrow1\quad \text{as}\quad n\longrightarrow+\infty.\]
  	Thus
  		\[\int_{\partial M}\varphi_n^2\,\sigma_{g_{t_n}}\longrightarrow1\quad \text{as}\quad n\longrightarrow+\infty.\]
  	Therefore of \eqref{cmc} we get
  	\[H_n\longrightarrow H_{t_{*}}\quad \text{as}\quad n\longrightarrow+\infty.\]
  	
  	In order to prove (2) note that $v_n=\varphi_n-\mathbf1$ satisfy
  	\begin{equation}\label{sc2}
  	\begin{cases}
  	\Delta_{g_{t_n}}v_n=0 &\quad \text{in}\ M,\\[.2cm]
  	\dfrac{\partial v_n}{\partial\eta^{t_n}}+\frac{m-2}{2}H_{g_{t_n}}\varphi_n=\frac{m-2}{2}H_n\varphi_n^{\frac{m}{m-2}} &\quad\text{on}\ \partial M.\\
  	\end{cases}
  	\end{equation}
  	It is follows from standard elliptic estimates  (see for instance \cite[Th.\,15.2, pag. 704]{agmon}) that
  	\begin{multline}\label{es}
  	\|\varphi_n-\mathbf1\|_{\,\mathbb{W}^{s,p}(M)}\leq\\ C_n\biggl\{\Bigl\|-\frac{m-2}{2}H_{g_{t_n}}\varphi_n+\frac{m-2}{2}H_{g_n}\varphi_n^{\frac{m}{m-2}}\Bigr\|_{\, \mathbb{W}^{s-1-\frac{1}{p},p}(\partial M)}+\|\varphi_n-\mathbf1\|_{\,\mathbb{L}^p(M)}\biggr\},
  	\end{multline}
  	where $s$ is positive integer,  $1\leq p=\frac{2(m-2)}{m}<2$ and the constant $C_n$ depends only on $s$,  the manifold $M$, the $\mathbb{L}^{\infty}-\text{norm}$ of the coefficients of the elliptic operator $\Delta_{g_{t_n}}$, the ellipticity constant of $\Delta_{g_{t_n}}$, and the moduli of continuity of the coefficients of $\Delta_{g_{t_n}}$. As $g_{t_n}$ tend to $g_{t_*}$ in the $\mathcal{C}^{k}$-topology then the coefficients of $\Delta_{g_{t_n}}$ tend uniformly to the coefficients of the operator $\Delta_{g_{t_*}}$. Thus, in \eqref{es} we can choose a constant $C_n\equiv C$ that does not depend on $n$. Trace theorem says that the inclusion $\mathbb{W}^{s,p}(M)\hookrightarrow \mathbb{W}^{s-\frac{1}{p},p}(\partial M)$ is continuous hence of \eqref{es} we have
  	\begin{multline}\label{nes}
  	\|\varphi_n-\mathbf1\|_{\,\mathbb{W}^{s,p}(M)}\leq \overline{C}\Bigl\{\left\|-\frac{m-2}{2}H_{g_{t_n}}\varphi_n+\frac{m-2}{2}H_{g_n}\varphi_n^{\frac{m}{m-2}}\right\|_{\, \mathbb{W}^{s-1,p}( M)}\\+\|\varphi_n-\mathbf1\|_{\,\mathbb{L}^p(M)}\Bigr\},
  \end{multline}
  for some positive constant $\overline{C}$.
Since the inclusion $\mathbb{H}^1(M)\hookrightarrow \mathbb{L}^{p}(M)$ is continuous, then $\|\varphi_n-\mathbf1\|_{\,\mathbb{L}^p(M)}\longrightarrow0.$ Furthermore, the inclusion $\mathbb{H}^1(M)\hookrightarrow \mathbb{W}^{1,p}(M)$ is  continuous because $1\leq p<2.$ Hence for $s=2$ we get that $\varphi_n\longrightarrow\mathbf1$ in $\mathbb{W}^{s-1,p}$. We claim that $\varphi_n^{\frac{m}{m-2}}\longrightarrow\mathbf1$ in $\mathbb{W}^{1,p}(M)$. Indeed:
  	\begin{align*}\int_{M}(\varphi_n^{\frac{m}{m-2}}-1)^{\frac{2(m-2)}{m}}\,\upsilon_{g_{t_*}}&\leq \text{Vol}_{g_{t_*}}(M)^{\frac{2}{m}}\Bigl(\int_{M}(\varphi_n^{\frac{m}{m-2}}-1)^{2}\,\upsilon_{g_{t_*}}\Bigr)^{\frac{m-2}{m}}\\
  	&=\text{Vol}_{g_{t_*}}(M)^{\frac{2}{m}}\int_{M}\Bigl[\bigl(\varphi_n^{\frac{2m}{m-2}}-1\bigr)-2\bigl(\varphi_n^{\frac{m}{m-2}}-1\bigr)\Bigr]\,\upsilon_{g_{t_*}}.
  	  	\end{align*}
   The continuity of the inclusions $\mathbb{H}^1(M)\hookrightarrow\mathbb{L}^{\frac{2m}{m-2}}(M)$ and $\mathbb{H}^1(M)\hookrightarrow\mathbb{L}^{\frac{m}{m-2}}(M)$ imply that
  \[\int_{M}(\varphi_n^{\frac{m}{m-2}}-1)^{\frac{2(m-2)}{m}}\,\upsilon_{g_{t_*}}\longrightarrow0.\]
  Moreover for each $1\leq i\leq m$
  \begin{align*}
  \int_{M}(\partial_{i}\varphi_n^{\frac{m}{m-2}})^{\frac{2(m-2)}{m}}\,\upsilon_{g_{t_*}}&=\Bigl(\frac{m}{m-2}\Bigr)^{\frac{m-2}{m}}\int_{M}\varphi_n^{\frac{4}{m}}(\partial_i\varphi_n)^{\frac{2(m-2)}{m}}\,\upsilon_{g_{t_*}}\\
  &\leq\Bigl(\frac{m}{m-2}\Bigr)^{\frac{m-2}{m}}\Bigl(\int_{M}(\partial_i\varphi_n)^{2}\,\upsilon_{g_{t_*}}\Bigr)^{\frac{m-2}{m}}\Bigl(\int_{M}\varphi_n^{2}\,\upsilon_{g_{t_*}}\Bigr)^{\frac{2}{m}}.
  \end{align*}
  As the right side of the above inequality tends to zero then \[\int_{M}(\partial_{i}\varphi_n^{\frac{m}{m-2}})^{\frac{2(m-2)}{m}}\,\upsilon_{g_{t_*}}\longrightarrow0,\]this proves our claim. By \eqref{nes} we have that $\varphi_n\longrightarrow\mathbf1$ in $\mathbb{W}^{2,p}(M)$.
    Using induction on $s$ in equation \eqref{es} we obtain $\varphi_n\longrightarrow\mathbf1$ in $\mathbb{W}^{s,p}$ for all integer $s$. This proves (2).
  	 	
  	Finally, the proof of statement (3) is a direct consequence of the continuous inclusion $\mathbb{W}^{r+1,p}(M)\hookrightarrow \mathcal{C}^{s}(\overline{M})$, which  holds when $r>s-1+\frac{m}{p}$, see for instance \cite[Th.\,2.30, pag. 50]{aubinl}.
  \end{proof}

\begin{remark}
	It is important to stress that, for all $t\in(0,\infty)$, one has $\Delta_{tg}=\frac{1}{t}\Delta,\ H_{tg}=\frac{1}{\sqrt{t}}H_g$ and $\eta^{tg}=\frac{1}{\sqrt{t}}\eta^g$. This means that the spectrum of the \linebreak operator $\mathcal{N}_g-H_g$ is invariant by homethety of the metric. On the other hand, $\sigma_{tg}=t^{m-1/2}\sigma_g$. When needed, we will normalize metrics to have volume 1 on the boundary, without changing the spectral theory of the operator $\mathcal{N}_g-H_g$. 
\end{remark}

\section{Bifurcation of solutions for the Yamabe problem on product manifolds}

Let $(M_1,g^{(1)})$ be a compact Riemannian manifold without boundary and null scalar curvature, and let $(M_2,g^{(2)})$ be a compact Riemannian manifold with boundary, null scalar curvature  and constant mean curvature. Consider the product manifold, $M=M_1\times M_2,$ which boundary is given by $\partial M=M_1\times \partial M_2$. Let $m_1$ and $m_2$ be the dimensions of $M_1$ and $M_2$, respectively, and assume that $m=\text{dim}\,M=m_1+m_2\geq3.$ For each $t\in(0,+\infty)$, define $g_t=g^{(1)}\oplus tg^{(2)}$ a metric on the product manifold $M$. It is easily see that the Riemannian manifold $(M,g_t)$ have null scalar curvature  and constant mean curvature \[H_{g_t}=\frac{m_2-1}{(m-1)\sqrt{t}}H_{g^{(2)}}=\frac{\hat{H}_{g^{(2)}}}{\sqrt{t}},\quad\text{for all}\  t>0,\] 
where $\hat{H}_{g^{(2)}}=\frac{m_2-1}{(m-1)}H_{g^{(2)}}$. Let $\mathcal{H}_t$ denote the subspace of $\mathbb{L}^2(\partial M)$ consisting of  functions $\psi$   such that $\psi=\varphi|_{\partial M}, \varphi\in\mathbb{H}^1_{\Delta}(M)$ and $\int_{\partial M}\psi\,\sigma_{g_t}=0$ and let $\mathcal{E}_t$ denote the subspace of $\mathbb{L}^2(\partial M)$ of those maps $\phi$ such that $\int_{\partial M}\phi\,\sigma_{g_t}=0$. Let $\mathcal{J}_t:\mathcal{H}_t\longrightarrow\mathcal{E}_t$ be the Jacobi operator defined by 
\[\mathcal{J}_t(\psi)=\mathcal{N}_{g_t}(\psi)-H_{g_{t}}\psi.\]

Denote by $0=\rho^{(0)}<\rho^{(1)}<\rho^{(2)}<\ldots$ the sequence of all distinct eigenvalues of $\Delta_{g^{(1)}}$, with geometric multiplicity $\mu^{(i)},\ i\geq0,$  and by  $\rho^{(i)}_j(t)$ the $j-$ésimo eigenvalue of the problem
\begin{equation}\label{pavi}
\Delta_{g^{(2)}}\varphi+t\rho^{(i)}\varphi=0\ \text{in}\ M_2,\quad
\frac{\partial \varphi}{\partial\eta^{2}}=\rho\varphi\ \text{on}\ \partial M_2,
\end{equation}
with $t>0.$ We will denote by $\mu^{(i)}_j$ the geometric multiplicity of $\rho^{(i)}_j(t)$ and we emphasize that $\rho^{(i)}_j(t)$'s are not necessarily all distinct.  The spectrum $\Sigma(\mathcal{J}_t)$ of the Jacobi operator $\mathcal{J}_t$ is given by
\[\Sigma(\mathcal{J}_t)=\Bigl\{\frac{\rho^{(i)}_j(t)-\hat{H}_{g^{(2)}}}{\sqrt{t}}\Bigl|\ i+j>0,\quad i,j\in\mathbb{N}^*\Bigr\},\]
where $\mathbb{N}^*=\mathbb{N}\cup\{0\}$. We are interested in determining all values of $t$ for which $\rho^{(i)}_j(t)=\hat{H}_{g^{(2)}}$, for some $i,j\in\mathbb{N}^{*}$ with $i+j>0.$ In this case, the Jacobi operator $\mathcal{J}_t$ is degenerate.

\begin{remark}\hfill
	\begin{enumerate}
		\item[(\text{a})] For $i=0$ and $j\in\mathbb{N}^*$, $\rho^{(i)}_{j}(t)$ is an eigenvalue for $\mathcal{N}_{g^{(2)}}$. Hence if  $\hat{H}_{g^{(2)}}$ is an eigevalue for $\mathcal{N}_{g^{(2)}}$ then the Jacobi operator $\mathcal{J}_t$ is degenerate for all $t>0$.
		\item[(\text{b})] Given $\rho^{(i)}>0$ and $j\in\mathbb{N}^*$, using the variational characterization of $\rho^{(i)}_j(t)$ it is easily seen that 
		\[\rho^{(i)}_j(t_1)<\rho^{(i)}_j(t_2),\]
		for all $0<t_1<t_2$.
	\end{enumerate}
	\end{remark}

\begin{proposition}
	Assume that $\hat{H}_{g^{(2)}}$ is not an eigevalue for $\mathcal{N}_{g^{(2)}}$. For $i\in\mathbb{N}$,  let $\rho^{(i)}_0(t)$ denote the first eigenvalue of the problem \eqref{pavi}. Then,
	\begin{enumerate}
		\item The map $(0,\infty)\ni t\longrightarrow\rho^{(i)}_0(t)\in\mathbb{R}$ is continuous;
		\item $\rho^{(i)}_0(t)\longrightarrow0,$  as $t\longrightarrow0$;
		\item $\rho^{(i)}_0(t)\longrightarrow\infty$, as $t\longrightarrow\infty$;
		\item If $H_{g^{(2)}}>0,$ there exists an unique $t_{i}>0$ satisfying $\rho^{(i)}_0(t_i)=\hat{H}_{g^{(2)}}$. 
	\end{enumerate}  
\end{proposition}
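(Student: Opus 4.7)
The plan is to reduce the proposition to a Rayleigh-type variational characterization of $\rho^{(i)}_0(t)$ and then read off the four properties from the structure of that minimum. Multiplying the interior equation in \eqref{pavi} by a test function $\varphi\in\mathbb{H}^1(M_2)$ and integrating by parts using the Steklov-type boundary condition $\partial\varphi/\partial\eta^{(2)}=\rho\varphi$, one obtains, with the paper's analytic sign convention for $\Delta$,
\begin{equation*}
\rho \;=\; \frac{\int_{M_2} |\nabla\varphi|^{2}\,\upsilon_{g^{(2)}} + t\rho^{(i)}\int_{M_2}\varphi^{2}\,\upsilon_{g^{(2)}}}{\int_{\partial M_2} \varphi^{2}\,\sigma_{g^{(2)}}}.
\end{equation*}
Since $\rho^{(i)}>0$ for $i\geq 1$, this functional is coercive on $\mathbb{H}^1(M_2)$, and the compactness of the trace $\mathbb{H}^1(M_2)\hookrightarrow \mathbb{L}^2(\partial M_2)$ allows the direct method of the calculus of variations to produce a positive minimizer; standard elliptic theory identifies this minimum with $\rho^{(i)}_0(t)$.

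With this characterization in hand, I would proceed as follows. For each fixed $\varphi$ the Rayleigh quotient is affine and strictly increasing in $t$, so $t\mapsto\rho^{(i)}_0(t)$ is an infimum of affine functions; it is therefore concave and nondecreasing. Strict monotonicity follows by inserting a minimizer $\varphi_{t_2}$ at the larger parameter $t_2$ into the Rayleigh quotient evaluated at $t_1<t_2$, where $\int_{M_2}\varphi_{t_2}^{2}>0$ forces $\rho^{(i)}_0(t_1)<\rho^{(i)}_0(t_2)$. Because concave functions on $(0,\infty)$ are continuous on the interior, (1) follows. For (2) I would plug in $\varphi\equiv\mathbf{1}$ to get the upper bound $\rho^{(i)}_0(t)\leq t\rho^{(i)}\,\mathrm{Vol}_{g^{(2)}}(M_2)/\mathrm{Vol}_{g^{(2)}}(\partial M_2)$ and combine it with the trivial lower bound $\rho^{(i)}_0(t)\geq 0$, which comes from the positivity of the numerator.

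For (3), I would argue by contradiction. Suppose $\rho^{(i)}_0(t_n)\leq C$ along some sequence $t_n\to\infty$, and let $\varphi_n$ be the corresponding minimizers normalized by $\int_{\partial M_2}\varphi_n^{2}\,\sigma_{g^{(2)}}=1$. Then $\int_{M_2}|\nabla\varphi_n|^{2}\leq C$ and $\int_{M_2}\varphi_n^{2}\leq C/(t_n\rho^{(i)})\to 0$, so $(\varphi_n)$ is bounded in $\mathbb{H}^1(M_2)$ and converges to zero in $\mathbb{L}^2(M_2)$. By the compactness of the trace, a subsequence converges strongly in $\mathbb{L}^2(\partial M_2)$, with limit $0$, contradicting the normalization. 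Finally, (4) is immediate: (1)--(3) make $\rho^{(i)}_0:(0,\infty)\to\mathbb{R}$ a continuous function with range accumulating at $0$ and $+\infty$, so the intermediate value theorem yields some $t_i>0$ with $\rho^{(i)}_0(t_i)=\hat{H}_{g^{(2)}}>0$, and the strict monotonicity already established gives uniqueness.

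The only delicate technical point is the attainability of the Rayleigh minimum and its identification with the first eigenvalue of \eqref{pavi}; this is routine once coercivity and the compact trace embedding are in place, and the remaining arguments reduce to concave-function calculus and the intermediate value theorem.
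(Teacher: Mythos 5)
Your argument is correct, and for parts (3) and (4) it is essentially the paper's own proof: assume a finite limit, normalize the minimizers by $\int_{\partial M_2}\varphi_n^2\,\sigma_{g^{(2)}}=1$, deduce $\int_{M_2}\varphi_n^2\,\upsilon_{g^{(2)}}\to0$ from the $t\rho^{(i)}$-term, and contradict the normalization via the compact trace embedding; then conclude (4) from the intermediate value theorem and strict monotonicity. Where you genuinely diverge is in (1) and (2). For continuity the paper runs a hands-on squeeze: it takes monotone sequences $t_n^{(1)}\searrow t_0$ and $t_n^{(2)}\nearrow t_0$ and proves $\inf_n\rho^{(i)}_0(t_n^{(1)})=\rho^{(i)}_0(t_0)=\sup_n\rho^{(i)}_0(t_n^{(2)})$, the harder half requiring weak compactness of a sequence of minimizers and weak lower semicontinuity of the Dirichlet energy. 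Your observation that $\rho^{(i)}_0(t)$ is an infimum of functions affine and nondecreasing in $t$, hence concave and therefore continuous on the open interval $(0,\infty)$, replaces all of that with soft convex-function calculus and avoids extracting minimizers at the limiting parameter altogether; the price is only that you must justify the Rayleigh characterization once, which you do. For (2) the paper bounds $0\le\rho^{(i)}_0(t)\le t\rho^{(i)}C$ via a Poincar\'e-type inequality $\int_{M_2}\varphi^2\le C\bigl(\int_{M_2}|\nabla\varphi|^2+\int_{\partial M_2}\varphi^2\bigr)$, while you simply test with $\varphi\equiv\mathbf1$ to get the explicit constant $\rho^{(i)}\,\mathrm{Vol}_{g^{(2)}}(M_2)/\mathrm{Vol}_{g^{(2)}}(\partial M_2)$; both yield the needed $O(t)$ bound. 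One small caveat: the paper's proof of (1) additionally normalizes its minimizers by $\int_{\partial M_2}\varphi_n\,\sigma_{g^{(2)}}=0$ (reflecting the restriction of the Jacobi operator to boundary-mean-zero functions); under that reading $\varphi\equiv\mathbf1$ would not be admissible in (2) and you would have to fall back on the Poincar\'e-inequality route, but since the statement and the paper's own proofs of (2) and (3) use the unconstrained Rayleigh minimum of \eqref{pavi}, your choice is consistent with the result being proved.
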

\begin{proof}
Since the map 	$(0,\infty)\ni t\longrightarrow\rho^{(i)}_0(t)\in\mathbb{R}$ is strictly increasing, in order to establish $(1)$ we will show that given $t_0>0$ and any two sequences $(t^{(1)}_{n})_n$,  $(t^{(2)}_{n})_n$ such that $t^{(1)}_{n}\searrow t_0$ and $t^{(2)}_{n}\nearrow t_0$ we have
	\begin{equation}\label{cem}
	\inf_{n}\rho^{(i)}_0(t^{(1)}_{n})=\rho_0^{(i)}(t_0)=\sup_{n}\rho^{(i)}_0(t^{(2)}_{n}).\end{equation}
It is clear that $\rho^{(i)}_0(t_0)<\rho^{(i)}_0(t^{(1)}_{n})$ for all $n\in\mathbb{N},$ hence $\rho^{(i)}_0(t_0)\leq\inf\limits_{n}\rho^{(i)}_0(t^{(1)}_{n})$. On the other hand, by the variational characterization of $\rho^{(i)}_{0}(t_0)$, there exists  $\varphi\in\mathcal{C}^{\infty}(\overline{M_2})$ such that 
\[\rho^{(i)}_0(t_0)=E_{t_0}(\varphi):=\int_{M_2}\Bigl(g(\nabla\varphi,\nabla\varphi)+t_0\rho^{(i)}\varphi^2\Bigr)\,\upsilon_{g^{(2)}}.\]
Thus of the variational characterization of $\rho^{(i)}_0(t_n^{(1)})$ follows that
\[\rho^{(i)}_0(t_n^{(1)})\leq E_{t_n^{(1)}}(\varphi):=\int_{ M_2}\Bigl(g(\nabla\varphi,\nabla\varphi)+t_n^{(1)}\rho^{(i)}\varphi^2\Bigr)\,\upsilon_{g^{(2)}},\quad \text{for all}\ n\in\mathbb{N}.\]	
Therefore 
\[\inf_{n}\rho^{(i)}_0(t_n^{(1)})\leq\inf_{n}E_{t_n^{(1)}}(\varphi)=E_{t_0}(\varphi)=\rho^{(i)}_0(t_0),\]
which proves the left equality in \eqref{cem}. We will show now the equality of the right hand side in \eqref{cem}. Given the sequence $(t_n^{(2)})_n$ such that  $t_n^{(2)}\nearrow t_0$, it is evident that $\sup\limits_{n}\rho^{(i)}_0(t_n^{(2)})\leq\rho^{(i)}_0(t_0)$. Given $n\in\mathbb{N}$, let $\varphi_n\in\mathcal{C}^{\infty}(\overline{M_2})$ be such that
\[\rho^{(i)}_0(t_n^{(2)})=E_{t_n^{(2)}}(\varphi_n),\ \int_{\partial M_2} \varphi_n^{2}\,\sigma_{g^{(2)}}=1,\ \text{and}\ \int_{\partial M_2} \varphi_n\,\sigma_{g^{(2)}}=0, \]
this $\varphi_n$ exists because of the variational characterization of $\rho^{(i)}_0(t_n^{(2)})$. Note that $\{\varphi_n\}$ is a bounded subset in $\mathbb{H}^1(M_2)$. Since bounded sets are weakly compact in a Hilbert space, there exists a subsequence of  $\{\varphi_n\}$, denoted also by  $\{\varphi_n\}$, that converges weakly to a function $\psi\in\mathbb{H}^1(M_2)$. Then $\int_{\partial M_2} \psi\,\sigma_{g^{(2)}}=0$. Since  the  embeddings $\mathbb{H}^1(M_2)$ into $\mathbb{L}^2(\partial M_2)$ and into $\mathbb{L}^2( M_2)$  are compact, then the sequence $\{\varphi_n\}$ satisfies 
\[\int_{\partial M_2} \varphi_n^{2}\,\sigma_{g^{(2)}}\longrightarrow\int_{\partial M_2} \psi^{2}\,\sigma_{g^{(2)}}\ \text{and}\ \int_{ M_2} \varphi_n^{2}\sigma_{g^{(2)}} \longrightarrow\int_{ M_2} \psi^{2}\,\sigma_{g^{(2)}}.\]
Hence $\int_{\partial M_2} \psi^{2}\sigma_{g^{(2)}}=1.$ From the variational characterization of $\rho^{(i)}_0(t_0)$ we get \linebreak$\rho^{(i)}_0(t_0)\leq E_{t_0}(\psi)$. Since
\[\int_{ M_2} g(\nabla\psi,\nabla\psi)\,\sigma_{g^{(2)}}\leq\limsup_{n} \int_{ M_2} g(\nabla\varphi_n^{2},\nabla\varphi_n^{2})\,\sigma_{g^{(2)}},\]
then
\[E_{t_0}(\psi)\leq\limsup_{n}E_{t_n^{(2)}}(\varphi_n)=\limsup_{n}\rho^{(i)}_0(t_n^{(2)})=\sup_{n}\rho^{(i)}_0(t_n^{(2)}),\]
the inequality above establishes the equality of the right hand side in \eqref{cem}, which completes the proof of $(1)$. Now we will prove (2). To this end note that $0\leq\rho^{(i)}_0(t)$ for all $t>0$. Thus $0\leq\inf\limits_{t>0}\rho^{(i)}_0(t).$ On the other hand, it is known that there exists $C>0$ such that
\[\int_{ M_2}\varphi^2\,\upsilon_{g^{(2)}}\leq C\biggl\{\int_{ M_2}g(\nabla\varphi,\nabla\varphi)\,\upsilon_{g^{(2)}}+\int_{\partial M_2}\varphi^2\,\sigma_{g^{(2)}}\biggr\},\] 
for all $\varphi\in\mathbb{H}^1(M_2)$. Thus 
\[E_t(\varphi)\leq\int_{ M_2}g(\nabla\varphi,\nabla\varphi)\,\upsilon_{g^{(2)}}+t\rho^{(i)}C\biggl\{\int_{ M_2}g(\nabla\varphi,\nabla\varphi)\,\upsilon_{g^{(2)}}+\int_{\partial M_2}\varphi^2\,\sigma_{g^{(2)}}\biggr\}.\] 
Hence
\[\frac{E_t(\varphi)}{\int_{\partial M_2}\varphi^2\,\sigma_{g^{(2)}}}\leq(1+t\rho^{i}C)\frac{\int_{ M_2}g(\nabla\varphi,\nabla\varphi)\,\upsilon_{g^{(2)}}}{\int_{\partial M_2}\varphi^2\,\sigma_{g^{(2)}}}+t\rho^{i}C,\]
for all $\varphi\in\mathbb{H}^1(M_2)\backslash\{0\}$. Using the variational characterization of $\rho^{(i)}_0(t)$, it follows from the inequality above that 
\[0\leq\rho^{(i)}_0(t)\leq t\rho^{(i)}C,\quad \text{for all}\ t>0.\]
Therefore 
\[\rho^{(i)}_0(t)\longrightarrow0\quad  \text{as}\quad t\longrightarrow0.\]
 In order to prove (3) suppose the assertion is false. Let $\rho\in\mathbb{R}$ be such that $\lim\limits_{t\longrightarrow\infty}\rho^{(i)}_0(t)=\rho$. Clearly $\rho^{(i)}_0(t)\leq\rho$. Let $\varphi_t\in\mathcal{C}^{\infty}(\overline{M_2)}$ be such that
 \[\rho^{(i)}_0(t)=E_t(\varphi_t)\quad \text{and}\quad \int_{\partial M}\varphi_t^{2}\,\sigma_{g^{(2)}}=1.\]
   We claim that  $\int_{M_2}\varphi_{t}^2\upsilon_{g^{(2)}}\longrightarrow0$ at $t\longrightarrow\infty$. In fact, otherwise 
 \[\inf_{t>0}\int_{M_2}\varphi_{t}^2\upsilon_{g^{(2)}}>0,\]
 hence there exists $t>0$ such that
 \[\rho<t\inf_{t>0}\int_{M_2}\varphi_{t}^2\upsilon_{g^{(2)}}\leq\rho^{(i)}_0(t),\]
this contradicts the fact that $\rho^{(i)}_0(t)\leq\rho$. However $\{\varphi_t\}_{t>0}$   is a bounded set in $\mathbb{H}^1(M_2)$, thus there exists a sequence $(\varphi_{t_n})_n\subset\{\varphi_t\}_{t>0}$ such that
 \[\int_{\partial M_2}\varphi_{t_n}^2\upsilon_{g^{(2)}}\longrightarrow0\quad{as}\quad n\longrightarrow\infty,\]
which is impossible since $\int_{\partial M_2}\varphi_{t_n}^2\upsilon_{g^{(2)}}=1$ for all $n\in\mathbb{N}$. Therefore $\lim\limits_{t\longrightarrow\infty}\rho^{(i)}_0(t)=\infty$. Finally, $(4)$ is a direct consequence of (1)---(3).
	\end{proof}

\begin{corollary}\label{corb}
	Let $(M_1,g^{(1)})$ be a compact Riemannian manifold without boundary and scalar curvature null. Let $(M_2,g^{(2)})$ be a compact Riemannian manifold with boundary, scalar curvature null, and constant mean curvature $H_{g^{(2)}}$ . We will denote by $M$ the product manifold $M_1\times M_2$. Consider the family of Riemannian metrics $g_t=g^{(1)}\oplus tg^{(2)},\ t>0$, on $M$ and suppose that  $m=\text{dim}\,M\geq3$. Then:
	\begin{enumerate}
		\item[(\text{a})] If $H_{g^{(2)}}\leq0$, then the Jacobi operator is nondegenerate for all $t\in]0,\infty[$.
		In particular, the family  $(g_t)_{t>0}$ is locally rigid for all $t>0$. 
		\item [(\text{b})] If $H_{g^{(2)}}>0$ and  $\hat{H}_{g_{t}}$ is not a Steklov eigevalue for $\Delta_{g^{(2)}}$, there exists a strictly decreasing sequence tending  to zero consisting of degeneracy instants  of the family $(g_t)_{t>0}$. For all      other values of $t,$ the Jacobi operator $\mathcal{J}_t$ is nonsingular. 
	\end{enumerate}  
\end{corollary}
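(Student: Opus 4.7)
The plan is to read everything off from the explicit spectral decomposition
\[\Sigma(\mathcal{J}_t)=\Bigl\{(\rho^{(i)}_j(t)-\hat{H}_{g^{(2)}})/\sqrt{t}: i+j>0\Bigr\}\]
established just above the corollary: the Jacobi operator $\mathcal{J}_t$ is nondegenerate exactly when $\rho^{(i)}_j(t)\neq\hat{H}_{g^{(2)}}$ for every admissible pair $(i,j)$.

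For part (a), I would split by $i$. When $i=0$ and $j\geq 1$ the value $\rho^{(0)}_j(t)$ is independent of $t$ and equals the $j$-th Steklov eigenvalue $\rho_j>0$. When $i\geq 1$, the Rayleigh quotient from the variational characterization of \eqref{pavi} contains the strictly positive term $t\rho^{(i)}\int_{M_2}\varphi^2$, which forces $\rho^{(i)}_j(t)\geq\rho^{(i)}_0(t)>0$ for every $j\geq 0$. In either case $\rho^{(i)}_j(t)>0\geq\hat{H}_{g^{(2)}}$, so $\mathcal{J}_t$ is nondegenerate for all $t>0$. The contrapositive of Remark \ref{ccbf} then gives local rigidity at every $t$: a bifurcation instant would force $\hat{H}_{g^{(2)}}$ to be an eigenvalue of $\mathcal{N}_{g_t}$, which cannot happen here.

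For part (b), the assumption that $\hat{H}_{g^{(2)}}$ is not a Steklov eigenvalue rules out any $i=0$ contribution, and item~(4) of the preceding Proposition applied to each $i\geq 1$ produces a unique $t_i>0$ with $\rho^{(i)}_0(t_i)=\hat{H}_{g^{(2)}}$. The strict monotonicity $\rho^{(i+1)}>\rho^{(i)}$ together with the variational characterization yields $\rho^{(i+1)}_0(t)>\rho^{(i)}_0(t)$ at every $t$; evaluating at $t=t_i$ and using the strict increase of $\rho^{(i+1)}_0$ in $t$ forces $t_{i+1}<t_i$, so $(t_i)$ is strictly decreasing. To show $t_i\to 0$, I would argue by contradiction: if $t_i\geq t_\infty>0$ and $\varphi_i$ are normalized minimizers with $\int_{\partial M_2}\varphi_i^2=1$, then $\int_{M_2}|\nabla\varphi_i|^2+t_\infty\rho^{(i)}\int_{M_2}\varphi_i^2\leq\hat{H}_{g^{(2)}}$, which bounds $\|\varphi_i\|_{\mathbb{H}^1(M_2)}$ and forces $\int_{M_2}\varphi_i^2\to 0$. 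Passing to a weakly convergent subsequence and using the compactness of the trace $\mathbb{H}^1(M_2)\hookrightarrow\mathbb{L}^2(\partial M_2)$ yields $\int_{\partial M_2}\varphi_i^2\to 0$, contradicting the normalization.

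The main obstacle is the final clause of (b), namely that at every $t$ outside the constructed sequence the operator is nonsingular. A priori, each pair $(i,j)$ with $i\geq 1$ and $\rho_j<\hat{H}_{g^{(2)}}$ could provide an extra degeneracy instant $t^{(i)}_j$ where $\rho^{(i)}_j(t)=\hat{H}_{g^{(2)}}$. To handle this one extends the monotonicity and limit arguments of the preceding Proposition to higher $j$: only the finitely many $j$ with $\rho_j<\hat{H}_{g^{(2)}}$ can contribute, and for each such $j$ the same $\mathbb{L}^2$-to-trace compactness argument shows that $(t^{(i)}_j)_{i\geq 1}$ is again strictly decreasing to $0$. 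The full set of degeneracy instants is therefore countable with $0$ as its only accumulation point, hence can be relabelled as a single strictly decreasing sequence tending to $0$, and the operator is nonsingular away from it.
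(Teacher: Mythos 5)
Your proposal is correct and follows essentially the same route as the paper: part (a) via the positivity of all $\rho^{(i)}_j(t)$ together with Remark \ref{ccbf}, and part (b) via the unique instants $t_i$ with $\rho^{(i)}_0(t_i)=\hat{H}_{g^{(2)}}$, monotonicity in $i$ and $t$, and the weak-compactness/trace argument forcing $t_i\to0$. The only difference is that you also sketch the final clause of (b) (finitely many relevant $j$, each branch $(t^{(i)}_j)_i$ decreasing to $0$), which the paper does not address in the corollary's proof but defers to the proof of Theorem \ref{thm}; your sketch is consistent with the argument given there.
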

\begin{proof} 
	(a) follows easily from remark \ref{ccbf}.
For each $i\in\mathbb{N}$, let $t_i$ denote the unique positive real number such that $\rho^{(i)}_0(t_i)=\hat{H}_{g^{(2)}}$. For $0<i_1<i_2$ we obtain 
\[\hat{H}_{g^{(2)}}=\rho^{(i_1)}_0(t_{i_1})<\rho^{(i_2)}_0(t_{i_1})\leq\rho^{(i_2)}_0(t),\quad  t\geq t_{{i_1}}\]
Therefore $t_{i+1}<t_i$ for all $i\in\mathbb{N}.$ Hence the sequence $(t_i)_{i\in\mathbb{N}}$ is strictly decreasing. Clearly the Jacobi operator $\mathcal{J}_{t_i}$ is degenerate for all $i\in\mathbb{N}$. Let $\varphi_i\in\mathcal{C}^{\infty}(\overline{M_2})$ be such that 
\begin{equation}\label{inpo}
\rho^{(i)}_0(t_i)=E_{t_i}(\varphi_i)\quad \text{and}\quad \int_{\partial M_2}\varphi_i^2\,\sigma_{g^{(2)}}=1.
\end{equation}
As $\rho^{(i)}_0(t_i)=\hat{H}_{g^{(2)}}$ for all $i\in\mathbb{N}$, from first equality in \eqref{inpo} follows  that
\[\int_{M_2}g(\nabla\varphi_i,\nabla\varphi_i)\,\upsilon_{g^{(2)}}\leq \hat{H}_{g^{(2)}},\quad \text{for all}\ i\in\mathbb{N}.\]
Using the  inequality above and the second equality in \eqref{inpo}, we conclude that $\{\varphi_i\}_{i\in\mathbb{N}}$ is  bounded in $\mathbb{H}^1(M_2)$. We claim that $\inf\limits_{i}\int_{M_2}\varphi_i^{2}\upsilon_{g^{(2)}}>0.$ In fact, otherwise there exists a sequence $(\varphi_{t_{i_n}})_{n\in\mathbb{N}}\subset\{\varphi_i\}$ which converges weakly to 0 in $\mathbb{H}^1(M_2)$. Since the embedding of $\mathbb{H}_1(M_2)$ into $\mathbb{L}_2(\partial M)$ is compact then $\int_{\partial M_2}\varphi_{t_{i_n}}^2\,\sigma_{g^{(2)}}\longrightarrow0$ as $n\longrightarrow\infty.$
This contradicts the fact that $\int_{\partial M_2}\varphi_i^2\,\sigma_{g^{(2)}}=1$ for all $i\in\mathbb{N}.$
Thus
\[0<t_i=\frac{\hat{H}_{g^{(2)}}-\int_{M_2}g(\nabla\varphi_i,\nabla\varphi_i)\,\upsilon_{g^{(2)}}}{\rho^{i}\int_{M_2}\varphi_i^{2}\upsilon_{g^{(2)}}}\leq\frac{\hat{H}_{g_{(2)}}}{\rho^{i}\inf\limits_{i}\int_{M_2}\varphi_i^{2}\upsilon_{g^{(2)}}}\longrightarrow0,\]
as $i\longrightarrow\infty,$ Which	proves (b).
	
\end{proof}
We are ready for our main result.
\begin{theorem}\label{thm}
	Let $(M_1,g^{(1)})$ be a compact Riemannian manifold without boundary with scalar curvature zero and $(M_2,g^{(2)})$ a compact Riemannian manifold with boundary, having scalar curvature zero and  positive constant mean curvature $H_{g^{(2)}}$. Assume that  $\hat{H}_{g_{t}}$ is not a Steklov eigevalue for $\Delta_{g^{(2)}}$ and $\text{dim}\,(M_1)+\text{dim}\,(M_2)\geq3.$ For all $t\in(0,\infty)$, let $g_t=g^{(1)}\oplus tg^{(2)}$ be the metric on the product manifold with boundary, $M=M_1\times M_2$. Then there exists a sequence tending to zero consisting of bifurcation instants for the family $(g_t)_{t>0}$. For all other values of $t>0$ the  family $(g_t)_{t>0}$ is locally rigid.
\end{theorem}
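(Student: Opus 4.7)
The strategy is to combine Corollary \ref{corb}(b) with the abstract bifurcation criterion of Theorem \ref{scb}. By Corollary \ref{corb}(b), the set of degeneracy instants of the family $(g_t)_{t>0}$ is a strictly decreasing sequence $(t_i)_{i\in\mathbb{N}}$ with $t_i\to 0$, and $\mathcal{J}_t$ is nonsingular at every other $t>0$. By Remark \ref{ccbf}, any bifurcation instant must be a degeneracy instant, so $(g_t)$ is automatically locally rigid at every $t\notin\{t_i\}$; this takes care of the ``locally rigid'' half of the statement.

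It remains to show that each $t_i$ is an actual bifurcation instant. I would fix $i$ and pick $a,b$ with $t_{i+1}<a<t_i<b<t_{i-1}$ (taking $b$ arbitrarily large if $i$ is the smallest index), so that $t_i$ is the \emph{unique} degeneracy instant of $(g_t)$ in $[a,b]$. In particular $g_a$ and $g_b$ are nondegenerate. Because Theorem \ref{scb} is stated for smooth curves in the normalized manifold $\mathcal{M}$, I would first rescale each $g_t$ by the unique positive constant placing it in $\mathcal{M}$; by the normalization remark this leaves the spectrum of $\mathcal{N}_{g_t}-H_{g_t}$ unchanged, hence preserves both the Morse index $\mathfrak{n}_t$ and the nondegeneracy of the endpoints. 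Theorem \ref{scb} will then produce a bifurcation instant in $(a,b)$ as soon as $\mathfrak{n}_a\neq\mathfrak{n}_b$, and Remark \ref{ccbf} forces that instant to be $t_i$ itself.

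To see $\mathfrak{n}_a\neq\mathfrak{n}_b$, I use the explicit description
\[\Sigma(\mathcal{J}_t)=\Bigl\{\tfrac{\rho^{(i')}_{j'}(t)-\hat H_{g^{(2)}}}{\sqrt{t}}\,:\,(i',j')\in\mathbb{N}^*\times\mathbb{N}^*,\ i'+j'>0\Bigr\}\]
recorded at the start of Section 4. Thus $\mathfrak{n}_t$ equals the number of pairs $(i',j')$ with $\rho^{(i')}_{j'}(t)<\hat H_{g^{(2)}}$, each counted with multiplicity $\mu^{(i')}\mu^{(i')}_{j'}$. By the choice of $[a,b]$, every pair $(i',j')\neq(i,0)$ contributes the same count at both endpoints. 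The mode $(i,0)$ contributes $\mu^{(i)}\mu^{(i)}_0\geq 1$ extra negative eigenvalues at $a$ compared with $b$, since $t\mapsto\rho^{(i)}_0(t)$ is strictly increasing (remark preceding the proposition on $\rho^{(i)}_0$) and crosses $\hat H_{g^{(2)}}$ precisely at $t_i$. Therefore $\mathfrak{n}_a>\mathfrak{n}_b$, and the proof is complete.

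The main technical point is the spectral bookkeeping at $t_i$: one must guarantee that the degeneracy instants are truly isolated and that $t_i$ forces a strict change in the Morse index, rather than, for instance, a cancelling pair of simultaneous eigenvalue crossings. Both requirements are already isolated in Corollary \ref{corb}(b) together with the monotonicity of the curves $t\mapsto\rho^{(i)}_0(t)$, so after this setup, the bifurcation step reduces to a clean application of Theorem \ref{scb} at each $t_i$.
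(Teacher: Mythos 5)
Your proposal is correct and follows essentially the same route as the paper: isolate each degeneracy instant, use the strict monotonicity of $t\mapsto\rho^{(i)}_j(t)$ to force a jump in the Morse index $\mathfrak{n}_t$ across it, and invoke Theorem \ref{scb}, with Remark \ref{ccbf} giving local rigidity elsewhere. The one step you delegate entirely to Corollary \ref{corb}(b) --- that the \emph{full} set of degeneracy instants (including crossings of the higher modes $\rho^{(r)}_j$ with $j\geq1$, not just $\rho^{(i)}_0$) is a discrete decreasing sequence accumulating only at $0$ --- is actually carried out inside the paper's proof of the theorem by a finiteness count in each gap $]\hat t_{i+1},\hat t_i[$, so you should be aware that the corollary's own proof does not cover it.
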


\begin{proof}
	Corollary \ref{corb} establishes the existence of a strictly decreasing sequence $(\hat{t}_n)$ converging to 0 such that the Jacobi operator $\mathcal{J}_{\hat{t}_n}$ is singular for all $n\in\mathbb{N}$. Note that
	\[\hat{H}_{g^{(2)}}=\rho^{(1)}_0(\hat{t}_1)<\rho^{(i)}_0(t)\leq\rho^{(i)}_j(t),\quad i,j\in\mathbb{N},\ \text{and}\ t>\hat{t}_1.\]
		Therefore the Jacobi operator is nonsingular on $]\hat{t}_1,+\infty[.$ Our next claim is that there are at most a finite number of degeneracy instants for the family $(g_t)_{t>0}$ into $]\hat{t}_{i+1},\hat{t}_i[.$ Indeed, note that
	\[\hat{H}_{g^{(2)}}=\rho^{(i+1)}_0(\hat{t}_{i+1})<\rho^{(i+1)}_0(t)\leq\rho^{(r)}_j(t),\quad r\geq i+1,\ t>\hat{t}_{i+1},\ j\in\mathbb{N}^{*}.\]
	Hence, if $\rho^{(r)}_j(t)=\hat{H}_{g^{(2)}},\ t\in]\hat{t}_{i+1},\hat{t}_i[,$ then $1\leq r\leq i$. In addition,  for all $r$ there exists $J(r)\in\mathbb{N}$ such that 
	\[\hat{H}_{g^{(2)}}<\rho^{(r)}_j(\hat{t}_{i+1})<\rho^{(r)}_{j}(t),\quad j>J(r),\ t>\hat{t}_{i+1}.\]
Let $J_0=\max\limits_{1\leq r\leq i }\min J(r)$. Thus , if $\rho^{(r)}_j(t)=\hat{H}_{g^{(2)}}$ with $t\in\left]\hat{t}_{i+1},\hat{t}_i\right[$ then $1\leq r\leq i$ and $1\leq j\leq J_0$. Finally, for $i_1<i_2$ we have that $\rho^{(i_1)}_j(t)<\rho^{(i_2)}_j(t)$ for all $t$ and $j$. Therefore, for each pair $(r,j)$ there exists at most one $t=t(r,j)$ such that $\rho^{(r)}_j(t)=\hat{H}_{g^{(2)}}$, which proves our claim. Hence the set of all degeneracy instants for the family $(g_t)_{t>0}$ 
consists of a strictly decreasing sequence $(t_{i})_{i\in\mathbb{N}}$ converging to 0. For each $i\in\mathbb{N}$, there exists $\epsilon=\epsilon(i)>0$ such that 
\[[t_{i-\epsilon},t_{i+\epsilon}]\cap(t_{n})_{n\in\mathbb{N}}=\{t_i\}.\]
Moreover, for all $i$ and $j$ the map $t\longrightarrow\rho^{(i)}_j(t)$ is strictly increasing. Therefore $\mathfrak{n}_{t_{i}-\epsilon}\neq\mathfrak{n}_{t_{i}+\epsilon}$. Theorem \ref{scb} ensures that every degeneracy instant is a bifurcation instant.
\end{proof}



\end{document}